\numberwithin{equation}{section}
\DeclareMathOperator{\ord}{ord}
\DeclareMathOperator{\Res}{Res}
\newcommand{\norm}[1]{\left\lVert#1\right\rVert}
\theoremstyle{plain} 
\newtheorem{theorem}{Theorem}[section] 
\newtheorem{lemma}[theorem]{Lemma}     
\newtheorem{proposition}[theorem]{Proposition}
\theoremstyle{definition} 
\newtheorem{example}[theorem]{Example}
\newtheorem{algorithm}[theorem]{Algorithm}
\theoremstyle{remark} 
\newtheorem{remark}[theorem]{Remark}
\newtheorem*{acknowledgments}{Acknowledgments}
\begin{document}

\title{Computing the Canonical Height of a Point in Projective Space}

\author[]{Elliot Wells}
\address{Mathematics Department, Brown University, Providence, RI 02912}
\email{ellwells@math.brown.edu}
\date{February 16, 2016}

\begin{abstract}
We give an algorithm which requires no integer factorization for computing the canonical height of a point in $\mathbb{P}^{1}(\mathbb{Q})$ relative to a morphism $\phi: \mathbb{P}_{\mathbb{Q}}^{1} \rightarrow \mathbb{P}_{\mathbb{Q}}^{1}$ of degree $d \geq 2$.
\end{abstract}

\maketitle

\section{Introduction}

Let $\phi: \mathbb{P}_{\mathbb{Q}}^{1} \rightarrow \mathbb{P}_{\mathbb{Q}}^{1}$ be a morphism of degree $d \geq 2$, and let $h$ be the logarithmic height on $\mathbb{P}^{1}(\mathbb{Q})$. The \textit{canonical height function relative to $\phi$} (see \cite{Silverman:2007}) is the function
\[
\hat{h}_{\phi}: \mathbb{P}^{1}(\mathbb{Q}) \longrightarrow \mathbb{R}_{\geq 0}
\]
defined by
\begin{equation}\label{eq:height_definition}
\hat{h}_{\phi}(P) = \lim_{n \rightarrow \infty}d^{-n}h(\phi^{n}(P));
\end{equation}
it is uniquely characterized by the two properties
\[
\hat{h}_{\phi}(\phi(P)) = d\hat{h}_{\phi}(P) \quad\text{and}\quad \hat{h}_{\phi}(P) = h(P) + O(1),
\]
where the constant in the $O(1)$ term depends on $\phi$ but not on $P$. The canonical height of a point $P$ relative to $\phi$ gives information about the behavior of $P$ under the iteration of $\phi$, and so canonical heights have numerous applications in arithmetic dynamics and arithmetic geometry. For example, $\hat{h}_{\phi}(P) = 0$ if and only if $P$ is preperiodic under $\phi$, and this fact provides a quick proof of a result of Northcott \cite{Northcott} that $\phi$ has finitely many preperiodic points in $\mathbb{P}^{1}(\mathbb{Q})$. Additionally, one of the quantities appearing in the Birch and Swinnerton-Dyer Conjecture -- the regulator -- is defined in terms of canonical heights on elliptic curves, which equivalently are canonical heights for Latt\`{e}s maps on $\mathbb{P}^{1}$. There are also a number of related conjectures in arithmetic dynamics, such as the Dynamical Lehmer Conjecture, concerning lower bounds on canonical heights of non-preperiodic points (see \cite[Conjecture 3.25]{Silverman:2007}). 

In this note, we give an algorithm for computing $\hat{h}_{\phi}(P)$ that is efficient even if every lift $\Phi = [F, G]: \mathbb{A}^{2}(\mathbb{Q}) \rightarrow \mathbb{A}^{2}(\mathbb{Q})$ of $\phi$ has large integer coefficients or if $d$ is large. To appreciate why such an algorithm might be helpful, let us recall the usual methods for computing $\hat{h}_{\phi}(P)$. The limit definition \eqref{eq:height_definition} of $\hat{h}_{\phi}$ is generally unsuitable for computation, because the number of digits of the coordinates of $\phi^{n}(P)$ grows exponentially with $n$. As an alternative, one decomposes $\hat{h}_{\phi}(P)$ (with $P \neq \infty$) as a sum of local canonical heights corresponding to the different absolute values on $\mathbb{Q}$:
\begin{equation}\label{eq:local_heights}
\hat{h}_{\phi}(P) = \sum_{v \in M_{\mathbb{Q}}}\hat{\lambda}_{\phi, v}(P) = \hat{\lambda}_{\phi, \infty}(P) + \sum_{p \ \text{prime}}\hat{\lambda}_{\phi, p}(P),
\end{equation}
see \cite[Theorem 5.61]{Silverman:2007}. Let $\Phi = [F, G]: \mathbb{A}^{2}(\mathbb{Q}) \rightarrow \mathbb{A}^{2}(\mathbb{Q})$ be a lift of $\phi$ such that $F$ and $G$ have relatively prime integer coefficients, let $R := \Res(F, G)$ be the resultant of $F$ and $G$, and let $\norm{F, G}$ denote the maximum of the absolute values of the coefficients of $F$ and $G$. Then for all primes $p$ such that $p \nmid R$, the local canonical height of $P = [x, y]$ at $p$ is given by the simple formula
\[
\hat{\lambda}_{\phi, p}(P) = \log\left(\frac{\max\{|x|_{p}, |y|_{p}\}}{|y|_{p}}\right),
\]
which allows us to rewrite \eqref{eq:local_heights} as
\begin{equation}\label{eq:modified_local_heights}
\hat{h}_{\phi}(P) = h(P) + \tilde{\lambda}_{\phi, \infty}(P) + \sum_{p \mid R}\tilde{\lambda}_{\phi, p}(P),
\end{equation}
where
\begin{equation}\label{eq:modified_local_height}
\tilde{\lambda}_{\phi, v}(P) := \hat{\lambda}_{\phi, v}(P) - \log\left(\frac{\max\{|x|_{v}, |y|_{v}\}}{|y|_{v}}\right).
\end{equation}
For any fixed $v \in M_{\mathbb{Q}}$, we can efficiently approximate the value of $\hat{\lambda}_{\phi, v}(P)$ (and hence also $\tilde{\lambda}_{\phi, v}(P)$) with the method described in \cite[Section 5]{Call:1993} (or with the algorithm in \cite[exercise 5.29]{Silverman:2007}). So using decomposition \eqref{eq:modified_local_heights}, we can efficiently compute an approximation of $\hat{h}_{\phi}(P)$, \textit{provided that we know the primes dividing $R$}. In practice, however, factoring $R$ is often time-consuming. Indeed, for most morphisms $\phi$ we have $R \approx \norm{F, G}^{2d}$, so $R$ grows exponentially with $d$ and can thus be time-consuming to factor even for moderately sized $d$ (in addition to when $F$ and/or $G$ has large coefficients). In short, for generic $\phi$ we expect factoring $R$ to be a non-trivial computational task.

In this note, we describe a practical algorithm \textit{which requires no integer factorization} for computing an approximation of the nonarchimedean term in \eqref{eq:modified_local_heights}. Combining this with any already existing algorithm for approximating the value of the archimedean term (e.g., \cite[Section 5]{Call:1993} or \cite[exercise 5.29]{Silverman:2007}) yields an algorithm for approximating $\hat{h}_{\phi}(P)$. The inspiration behind our algorithm comes from an algorithm (requring no integer factorization) in \cite{Stoll:quasilinear} for computing the canonical height of a point on an elliptic curve; we show how the ideas in \cite{Stoll:quasilinear} -- which deal with morphisms of elliptic curves over $\mathbb{Q}$ -- generalize to morphisms of $\mathbb{P}^{1}$ over $\mathbb{Q}$.

In fact, it is instructive to compare our original problem of computing $\hat{h}_{\phi}(P)$ to that of computing the canonical height of a point on an elliptic curve (as defined, e.g., in \cite{Silverman:1994}). On the one hand, the latter computation can be viewed as a special case of the former. Indeed, the duplication map $Q \mapsto 2[Q]$ on an elliptic curve $E/\mathbb{Q}$ induces a morphism $\psi: \mathbb{P}_{\mathbb{Q}}^{1} \rightarrow \mathbb{P}_{\mathbb{Q}}^{1}$, called a Latt\`{e}s map, with the property that for any $P \in E(\mathbb{Q})$ the canonical height of $P$ (in the sense of the definition for points on an elliptic curve) equals $\hat{h}_{\psi}(P)$, after appropriate normalizations. For a more detailed explanation of Latt\`{e}s maps, see, e.g., \cite[Section 6.4]{Silverman:2007}. However, computing canonical heights of points on an elliptic curve $E/\mathbb{Q}$ is in some sense much easier than computing $\hat{h}_{\phi}(P)$ for an arbitrary morphism $\phi: \mathbb{P}_{\mathbb{Q}}^{1} \rightarrow \mathbb{P}_{\mathbb{Q}}^{1}$, due to the extra structure arising from the group law on $E$. More precisely, one can show that for $P \in E(\mathbb{Q})$, the nonarchimedean term in \eqref{eq:local_heights} has the form
\[
\sum_{p}r_{p}\log(p)
\]
where the $r_{p}$, which \textit{a priori} are arbitrary real numbers, are in fact rational numbers of a very precise form; moreover, these rational numbers can often be determined, with very little factorization, by computing just a few multiples $[2]P, [3]P, \ldots$ of $P$. These observations are made in \cite{Silverman:1997} to give a nearly factorization free algorithm for computing canonical heights on elliptic curves, and \cite{Stoll:quasilinear} gives a completely factorization free algorithm by exploiting the constraints on the values of the $r_{p}$'s. For a morphism $\phi: \mathbb{P}_{\mathbb{Q}}^{1} \rightarrow \mathbb{P}_{\mathbb{Q}}^{1}$, in contrast, it is unknown whether the $r_{p}$'s must be rational, and in any case they cannot be determined simply by computing a small number of iterates of $\phi$. This makes computing $\hat{h}_{\phi}(P)$ more difficult, and any algorithm for doing so will, in general, return an approximation of the $r_{p}$'s, rather than an exact value as provided by the algorithms of \cite{Silverman:1997} and \cite{Stoll:quasilinear}.

Our paper is organized as follows. In Section \ref{s:notation}, we establish notation, describe a decomposition of $\hat{h}_{\phi}(P)$ analogous to \eqref{eq:modified_local_heights}, and prove some basic results that are needed to analyze our algorithm. In section \ref{s:algorithm}, we describe a factorization free algorithm for computing the nonarchimedean term in our decomposition. Finally, we present some examples in section \ref{s:examples}.

\begin{acknowledgments}
The author would like to thank his advisor, Joseph Silverman for his guidance and insightful discussions on this problem, as well as for his helpful suggestions on improving the drafts of this paper.
\end{acknowledgments}

\section{Notation and Preliminary Results}\label{s:notation}

Below is a summary of the notation and definitions used throughout this note:
\begin{itemize}
	\item $M_{\mathbb{Q}}$ - the set of absolute values on $\mathbb{Q}$, with the usual normalizations.
	\item $M^{\infty}_{\mathbb{Q}}$ - the set of archimedean absolute values on $\mathbb{Q}$.
	\item $M^{0}_{\mathbb{Q}}$ - the set of nonarchimedean absolute values on $\mathbb{Q}$.
	\item $\ord_{p}(a)$ - the greatest exponent $e$ such that $p^{e}$ divides $a \in \mathbb{Z}$, where $p$ is a prime.
	\item $\phi: \mathbb{P}_{\mathbb{Q}}^{1} \rightarrow \mathbb{P}_{\mathbb{Q}}^{1}$ - a morphism of degree $d \geq 2$.
	\item $\Phi = [F, G]: \mathbb{A}^{2}(\mathbb{Q}) \rightarrow \mathbb{A}^{2}(\mathbb{Q})$ - a lift of $\phi$ with integer coefficients.
	\item $\norm{F, G}$ - the maximum of the (archimedean) absolute values of the coefficients of $F$ and $G$.
	\item $\norm{\ \cdot \ }_{v}$ - the sup norm associated to $v \in M_{\mathbb{Q}}$; i.e., 
	\[
	\norm{(x, y)}_{v} = \max\{|x|_{v}, |y|_{v}\}
	\]
	for any $(x, y) \in \mathbb{A}^{2}(\mathbb{Q})$. 
	\item $\Res(F, G)$ - the resultant of the homogeneous polynomials $F$ and $G$.
	\item $h$ - the logarithmic height on $\mathbb{P}^{1}(\mathbb{Q})$.
	\item $\hat{h}_{\phi}$ - the canonical height associated with the morphism $\phi$.
	\item $\Lambda_{\Phi, v}: \mathbb{P}^{1}(\mathbb{Q}) \rightarrow \mathbb{R}$ - the function defined by
	\[
	\Lambda_{\Phi, v}: [x, y] \mapsto \log(\norm{\Phi(x, y)}_{v}^{-1}) - d\log(\norm{(x, y)}_{v}^{-1})
	\]
	for $v \in M_{\mathbb{Q}}$ and a lift $\Phi = [F, G]$ of $\phi$.
	\item $\Omega_{\Phi, s}, \mathcal{H}_{\Phi, s}: \mathbb{P}^{1}(\mathbb{Q}) \rightarrow \mathbb{R}$ - for $s \in \{0, \infty\}$, the functions defined by
	\[
	\Omega_{\Phi, s}: P \mapsto \sum_{v \in M^{s}_{\mathbb{Q}}}\Lambda_{\Phi, v}(P)
	\]
	and
	\begin{equation}\label{eq:decomposition_terms}
	\mathcal{H}_{\Phi, s}: P \mapsto \sum_{n=0}^{\infty}\frac{\Omega_{\Phi, s}(\phi^{n}(P))}{d^{n+1}}
	\end{equation}
	for a lift $\Phi = [F, G]$ of $\phi$.
\end{itemize}

\begin{remark}
For any $P \in \mathbb{P}^{1}(\mathbb{Q})$, we have
\[
\mathcal{H}_{\Phi, \infty}(P) = \tilde{\lambda}_{\phi, \infty}(P),
\]
where $\tilde{\lambda}_{\phi, v}(P)$ is the modified local canonical height defined by \eqref{eq:modified_local_height}. In particular, there are efficient algorithms in the literature for computing $\mathcal{H}_{\Phi, \infty}(P)$.
\end{remark}

We begin by obtaining a decomposition of $\hat{h}_{\phi}(P)$, analogous to \eqref{eq:modified_local_heights}, as a sum of the logarithmic height of $P$ and an archimedean and nonarchimedean term.

\begin{lemma}\label{l:height_identity}
Let $\phi: \mathbb{P}_{\mathbb{Q}}^{1} \rightarrow \mathbb{P}_{\mathbb{Q}}^{1}$ be a morphism of degree $d \geq 2$, and let $\Phi$ be a lift of $\phi$. Then for any $P \in \mathbb{P}^{1}(\mathbb{Q})$, we have
\[
h(\phi(P)) - dh(P) = -(\Omega_{\Phi, \infty}(P) + \Omega_{\Phi, 0}(P)).
\]
\end{lemma}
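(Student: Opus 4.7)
The plan is to prove the identity by direct computation, unpacking the definitions of $h$ and $\Lambda_{\Phi, v}$ and then regrouping the resulting sum over places. Since the statement is essentially a bookkeeping identity relating the global failure of $h$ to be exactly $\phi$-invariant to the sum of the corresponding local discrepancies, I expect no genuine obstacle; the only care required is in verifying that each piece is well-defined, which reduces to the product formula.

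First, I would fix a representative $(x, y) \in \mathbb{Z}^2 \setminus \{0\}$ of $P$ and write the logarithmic height as a sum over all places:
\[
h(P) = \sum_{v \in M_{\mathbb{Q}}} \log \norm{(x, y)}_{v}.
\]
This expression is independent of the choice of representative by the product formula, and since $(x, y)$ is integral, only finitely many terms are nonzero. Writing $\Phi(x, y) = (F(x, y), G(x, y))$, the fact that $F, G$ are homogeneous of degree $d$ gives $\Phi(\lambda x, \lambda y) = \lambda^d \Phi(x, y)$ for any scalar $\lambda$, so the same product-formula argument shows
\[
h(\phi(P)) = \sum_{v \in M_{\mathbb{Q}}} \log \norm{\Phi(x, y)}_{v},
\]
again as a finite sum independent of the chosen lift.

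Next, I would subtract $d \cdot h(P)$ from $h(\phi(P))$ termwise:
\[
h(\phi(P)) - d h(P) = \sum_{v \in M_{\mathbb{Q}}} \bigl( \log \norm{\Phi(x, y)}_{v} - d \log \norm{(x, y)}_{v} \bigr).
\]
From the definition
\[
\Lambda_{\Phi, v}(P) = \log \norm{\Phi(x, y)}_{v}^{-1} - d \log \norm{(x, y)}_{v}^{-1},
\]
each summand on the right is exactly $-\Lambda_{\Phi, v}(P)$. Splitting the sum into its archimedean and nonarchimedean parts then yields
\[
h(\phi(P)) - d h(P) = -\sum_{v \in M_{\mathbb{Q}}} \Lambda_{\Phi, v}(P) = -\bigl(\Omega_{\Phi, \infty}(P) + \Omega_{\Phi, 0}(P)\bigr),
\]
as desired.

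The only subtlety worth flagging in the writeup is the switch from the projective quantities $h(P)$ and $h(\phi(P))$ to the affine expressions involving $(x,y)$ and $\Phi(x, y)$; this is precisely where the product formula enters, combined with the degree-$d$ homogeneity of $F$ and $G$. Everything else is a rearrangement of finite sums.
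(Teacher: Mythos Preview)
Your proof is correct and follows essentially the same approach as the paper: express both $h(P)$ and $h(\phi(P))$ as sums of $\log\norm{\cdot}_v$ over all places, subtract termwise to recognize each summand as $-\Lambda_{\Phi,v}(P)$, and split into archimedean and nonarchimedean parts. Your additional remarks about the product formula and the degree-$d$ homogeneity of $F,G$ make explicit the well-definedness that the paper leaves implicit, but the argument is otherwise identical.
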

\begin{proof}
Fix $P = [x, y] \in \mathbb{P}^{1}(\mathbb{Q})$. We have
\begin{align*}
h(\phi(P)) - dh(P) &= \sum_{v \in M_{\mathbb{Q}}}-\log(\norm{\Phi(x, y)}_{v}^{-1}) - d\sum_{v \in M_{\mathbb{Q}}}-\log(\norm{(x, y)}_{v}^{-1}) \\
								   &= \sum_{v \in M_{\mathbb{Q}}}-(\log(\norm{\Phi(x, y)}_{v}^{-1}) - d\log(\norm{(x, y)}_{v}^{-1})) \\
								   &= \sum_{v \in M_{\mathbb{Q}}}-\Lambda_{\Phi, v}(P) \\
								   &= \sum_{v \in M^{\infty}_{\mathbb{Q}}}-\Lambda_{\Phi, v}(P) + \sum_{v \in M^{0}_{\mathbb{Q}}}-\Lambda_{\Phi, v}(P) \\
								   &= -(\Omega_{\Phi, \infty}(P) + \Omega_{\Phi, 0}(P)).
\end{align*}
\end{proof}

\begin{proposition}\label{p:canonical_height_formula}
Let $\phi: \mathbb{P}_{\mathbb{Q}}^{1} \rightarrow \mathbb{P}_{\mathbb{Q}}^{1}$ be a morphism of degree $d \geq 2$, and let $\Phi$ be a lift of $\phi$. Then for any $P \in \mathbb{P}^{1}(\mathbb{Q})$, we have
\[
\hat{h}_{\phi}(P) = h(P) - \mathcal{H}_{\Phi, \infty}(P) - \mathcal{H}_{\Phi, 0}(P).
\]
\end{proposition}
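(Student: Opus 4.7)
The plan is to use the limit definition \eqref{eq:height_definition} of $\hat{h}_\phi$ combined with Lemma \ref{l:height_identity} applied to the iterates $\phi^n(P)$ to produce a telescoping sum.

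First I would apply Lemma \ref{l:height_identity} with $P$ replaced by $\phi^n(P)$, obtaining
\[
h(\phi^{n+1}(P)) - d\, h(\phi^n(P)) = -\bigl(\Omega_{\Phi,\infty}(\phi^n(P)) + \Omega_{\Phi,0}(\phi^n(P))\bigr).
\]
Dividing by $d^{n+1}$ and summing from $n = 0$ to $n = N-1$ gives a telescoping identity
\[
\frac{h(\phi^N(P))}{d^N} - h(P) = -\sum_{n=0}^{N-1}\frac{\Omega_{\Phi,\infty}(\phi^n(P)) + \Omega_{\Phi,0}(\phi^n(P))}{d^{n+1}}.
\]
Letting $N \to \infty$, the left-hand side tends to $\hat{h}_\phi(P) - h(P)$ by \eqref{eq:height_definition}, and the right-hand side is $-\mathcal{H}_{\Phi,\infty}(P) - \mathcal{H}_{\Phi,0}(P)$ by definition \eqref{eq:decomposition_terms}, yielding the claimed identity.

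The main technical point is to verify that the two series $\mathcal{H}_{\Phi,\infty}(P)$ and $\mathcal{H}_{\Phi,0}(P)$ converge absolutely, so that splitting the limit into archimedean and nonarchimedean parts is legitimate. For this I would note that $\Omega_{\Phi,\infty}$ consists of a single term $\Lambda_{\Phi,v_\infty}$, which is bounded on $\mathbb{P}^1(\mathbb{Q})$ by a constant depending only on $\Phi$ (since $\log\norm{\Phi(x,y)}_\infty - d\log\norm{(x,y)}_\infty$ is the logarithm of a ratio of homogeneous forms of matching degree with coefficients bounded in terms of $\norm{F,G}$). A similar uniform bound holds for $\Omega_{\Phi,0}(Q)$: for any $Q \in \mathbb{P}^1(\mathbb{Q})$, $\Lambda_{\Phi,p}(Q)$ vanishes for every prime $p \nmid \Res(F,G)$, and the remaining finitely many terms admit a standard uniform bound involving $\ord_p(R)$. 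Hence each summand in \eqref{eq:decomposition_terms} is $O(d^{-n-1})$, giving absolute convergence of both $\mathcal{H}_{\Phi,\infty}(P)$ and $\mathcal{H}_{\Phi,0}(P)$.

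With the absolute convergence in hand, the telescoping calculation above yields the proposition. I expect the only subtle step to be this bookkeeping about convergence of the nonarchimedean series; once that is settled, the rest is a formal manipulation of \eqref{eq:height_definition} and Lemma \ref{l:height_identity}.
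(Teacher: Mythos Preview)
Your argument is correct and follows essentially the same route as the paper: telescope $d^{-N}h(\phi^N(P)) - h(P)$ using Lemma~\ref{l:height_identity} applied to each iterate, then pass to the limit. In fact you are slightly more careful than the paper, which splits the series into archimedean and nonarchimedean parts without explicitly justifying absolute convergence; your remarks on the uniform boundedness of $\Omega_{\Phi,\infty}$ and $\Omega_{\Phi,0}$ fill that gap (and the nonarchimedean bound you sketch is exactly Lemma~\ref{l:omega_bound}).
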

\begin{proof}
Fix $P \in \mathbb{P}^{1}(\mathbb{Q})$. From the definition \eqref{eq:height_definition} of $\hat{h}_{\phi}(P)$ and Lemma \ref{l:height_identity}, we have
\begin{align*}
\hat{h}_{\phi}(P) &= \lim_{m \rightarrow \infty}\frac{h(\phi^{m}(P))}{d^{m}} \\
							    &= \lim_{m \rightarrow \infty}\left(h(P) + \sum_{n = 0}^{m-1}\frac{h(\phi^{n+1}(P)) - dh(\phi^{n}(P))}{d^{n+1}}\right) \\
                  &= h(P) + \sum_{n=0}^{\infty}\frac{-(\Omega_{\Phi, \infty}(\phi^{n}(P)) + \Omega_{\Phi, 0}(\phi^{n}(P)))}{d^{n+1}} \\
                  &= h(P) - \sum_{n=0}^{\infty}\frac{\Omega_{\Phi, \infty}(\phi^{n}(P))}{d^{n+1}} - \sum_{n=0}^{\infty}\frac{\Omega_{\Phi, 0}(\phi^{n}(P))}{d^{n+1}} \\
                  &= h(P) - \mathcal{H}_{\Phi, \infty}(P) - \mathcal{H}_{\Phi, 0}(P).
\end{align*}
\end{proof}

Next, we establish some simple facts that are needed in the following section to analyze our algorithm. 

\begin{lemma}\label{l:compute_omega}
Let $\phi: \mathbb{P}_{\mathbb{Q}}^{1} \rightarrow \mathbb{P}_{\mathbb{Q}}^{1}$ be a morphism of degree $d \geq 2$, and let $\Phi = [F, G]$ be a lift of $\phi$ such that $F$ and $G$ have integer coefficients. Let $Q \in \mathbb{P}^{1}(\mathbb{Q})$, and write $Q = [x, y]$ with $(x, y) \in \mathbb{Z}^{2}$ and $\gcd(x, y) = 1$. Then 
\[
\Omega_{\Phi, 0}(Q) = \log(\gcd(F(x, y), G(x, y))).
\]
\end{lemma}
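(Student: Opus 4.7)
The plan is to unfold the definition of $\Omega_{\Phi,0}$ as a sum over primes of $\Lambda_{\Phi,p}$, exploit the coprimality hypothesis on $(x,y)$ to kill the contribution from $\|(x,y)\|_p$, and then rewrite each remaining $p$-adic term as the $p$-adic valuation of $\gcd(F(x,y),G(x,y))$.

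First I would write
\[
\Omega_{\Phi,0}(Q) = \sum_{p \text{ prime}}\Lambda_{\Phi,p}(Q) = \sum_{p}\Bigl[\log\bigl(\norm{\Phi(x,y)}_{p}^{-1}\bigr) - d\log\bigl(\norm{(x,y)}_{p}^{-1}\bigr)\Bigr].
\]
Since $\gcd(x,y)=1$, for every prime $p$ at least one of $x,y$ is a $p$-adic unit, so $\norm{(x,y)}_{p} = \max\{|x|_{p},|y|_{p}\} = 1$ and the second term in each summand vanishes.

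Next I would compute the first term. Because $F$ and $G$ have integer coefficients and $(x,y)\in \mathbb{Z}^{2}$, the values $F(x,y)$ and $G(x,y)$ are integers, so for any prime $p$,
\[
\norm{\Phi(x,y)}_{p} = \max\{|F(x,y)|_{p},|G(x,y)|_{p}\} = p^{-\min\{\ord_{p}F(x,y),\,\ord_{p}G(x,y)\}} = p^{-\ord_{p}\gcd(F(x,y),G(x,y))}.
\]
Taking logarithms and summing over all primes gives
\[
\Omega_{\Phi,0}(Q) = \sum_{p}\ord_{p}\bigl(\gcd(F(x,y),G(x,y))\bigr)\log p = \log\bigl(\gcd(F(x,y),G(x,y))\bigr),
\]
where the last equality uses the prime factorization of the positive integer $\gcd(F(x,y),G(x,y))$.

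There is no real obstacle here: the statement is essentially a bookkeeping exercise once one notes that the coprimality of $x$ and $y$ makes $(x,y)$ a primitive lift of $Q$ at every finite place simultaneously. The only minor point to keep in mind is that $F(x,y)$ and $G(x,y)$ are not both zero (otherwise $\phi$ would fail to be a morphism at $Q$), so the $\gcd$ and its logarithm are well-defined.
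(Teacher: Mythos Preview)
Your proof is correct and follows essentially the same route as the paper's: use $\gcd(x,y)=1$ to make $\norm{(x,y)}_p=1$ at every finite place, then identify each remaining term as $\ord_p\gcd(F(x,y),G(x,y))\log p$ and sum. Your added remark that $F(x,y)$ and $G(x,y)$ cannot both vanish (since $\phi$ is a morphism) is a reasonable sanity check the paper leaves implicit.
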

\begin{proof}
Let $v \in M^{0}_{\mathbb{Q}}$ be the absolute value associated with the prime number $p$. The assumption that $(x, y) \in \mathbb{Z}^{2}$ with $\gcd(x, y) = 1$ implies that 
\[
\log(\norm{(x, y)}_{v}^{-1}) = 0,
\] 
and it follows that
\begin{align*}
\Lambda_{\Phi, v}(Q) &= \log(\norm{\Phi(x, y)}_{v}^{-1}) - d\log(\norm{(x, y)}_{v}^{-1}) \\
									   &= \log(\norm{F(x, y), G(x, y)}_{v}^{-1}) \\
									   &= \log{p^{\min\{\ord_{p}(F(x, y)), \ord_{p}(G(x, y))\}}}.
\end{align*}
So we have
\begin{align*}
\Omega_{\Phi, 0}(Q) &= \sum_{v \in M^{0}_{\mathbb{Q}}}\Lambda_{\Phi, v}(Q) \\
									  &= \sum_{p \ \text{prime}}\log{p^{\min\{\ord_{p}(F(x, y)), \ord_{p}(G(x, y))\}}} \\
									  &= \log\left(\prod_{p \ \text{prime}}{p^{\min\{\ord_{p}(F(x, y)), \ord_{p}(G(x, y))\}}}\right) \\
									  &= \log(\gcd(F(x, y), G(x, y))).
\end{align*}
\end{proof}

\begin{lemma}\label{l:gcd_divides_res}
Let $F(X, Y), G(X, Y) \in \mathbb{Z}[X, Y]$ be homogeneous polynomials of degree $d$, and let $(a, b) \in \mathbb{Z}^{2}$ with $\gcd(a, b) = 1$. Then $\gcd(F(a, b), G(a, b))$ divides $\Res(F, G)$.
\end{lemma}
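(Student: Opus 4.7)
The plan is to invoke the standard ``ideal membership'' identities for the resultant: there exist homogeneous polynomials $A_1, B_1, A_2, B_2 \in \mathbb{Z}[X,Y]$ such that
\[
A_1(X,Y) F(X,Y) + B_1(X,Y) G(X,Y) = \Res(F,G)\cdot X^{2d-1},
\]
\[
A_2(X,Y) F(X,Y) + B_2(X,Y) G(X,Y) = \Res(F,G)\cdot Y^{2d-1}.
\]
This is a classical consequence of Cramer's rule applied to the Sylvester matrix of $F$ and $G$: the cofactors automatically have integer coefficients because the Sylvester matrix does. I would cite this rather than reprove it (e.g., one of the standard references on the resultant, or the analogous statement in \cite{Silverman:2007}).

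Given these identities, the rest is a short arithmetic argument. Substituting $(X,Y) = (a,b)$ into both identities yields
\[
A_1(a,b) F(a,b) + B_1(a,b) G(a,b) = \Res(F,G)\cdot a^{2d-1},
\]
\[
A_2(a,b) F(a,b) + B_2(a,b) G(a,b) = \Res(F,G)\cdot b^{2d-1}.
\]
Hence $D := \gcd(F(a,b), G(a,b))$ divides each right-hand side, so $D \mid \Res(F,G) a^{2d-1}$ and $D \mid \Res(F,G) b^{2d-1}$. Because $\gcd(a,b) = 1$, also $\gcd(a^{2d-1}, b^{2d-1}) = 1$, so $D$ must divide $\Res(F,G)$ itself.

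The main obstacle, such as it is, is simply the invocation of the correct integral form of the resultant identity: one must verify (or cite) that the cofactor polynomials $A_i, B_i$ can be taken with integer, not merely rational, coefficients. Once that is in hand, the proof reduces to the one-line coprimality argument above.
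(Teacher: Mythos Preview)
Your proof is correct and follows essentially the same approach as the paper: the paper likewise cites the integral resultant identities $A_iF + B_iG = \Res(F,G)\cdot X^{2d-1}$ (resp.\ $Y^{2d-1}$) from \cite[Proposition~2.13(c)]{Silverman:2007}, evaluates at $(a,b)$, and uses $\gcd(a,b)=1$ to conclude. Your remark about ensuring the cofactors $A_i,B_i$ lie in $\mathbb{Z}[X,Y]$ is exactly the point handled by that citation.
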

\begin{proof}
By \cite[Proposition 2.13(c)]{Silverman:2007}, there exist polynomials
\[
A_{1}, B_{1}, A_{2}, B_{2} \in \mathbb{Z}[X, Y]
\]
such that
\begin{align*}
A_{1}(X, Y)F(X, Y) + B_{1}(X, Y)G(X, Y) &= \Res(F, G)X^{2d-1}, \\
A_{2}(X, Y)F(X, Y) + B_{2}(X, Y)G(X, Y) &= \Res(F, G)Y^{2d-1},
\end{align*}
and evaluating at $(a, b)$ yields that
\begin{align*}
A_{1}(a, b)F(a, b) + B_{1}(a, b)G(a, b) &= \Res(F, G)a^{2d-1}, \\
A_{2}(a, b)F(a, b) + B_{2}(a, b)G(a, b) &= \Res(F, G)b^{2d-1}.
\end{align*}
So $\gcd(F(a, b), G(a, b))$ divides both $\Res(F, G)a^{2d-1}$ and $\Res(F, G)b^{2d-1}$ with $\gcd(a, b) = 1$, which implies that $\gcd(F(a, b), G(a, b))$ divides $\Res(F, G)$.
\end{proof}

\begin{lemma}\label{l:compute_omega_modulo}
Let $F(X, Y), G(X, Y) \in \mathbb{Z}[X, Y]$ be homogeneous polynomials of degree $d$, and let $(x, y) \in \mathbb{Z}^{2}$ with $\gcd(x, y) = 1$. Fix any $(x', y') \in \mathbb{Z}^{2}$ such that
\begin{align*}
x' &\equiv F(x, y) \pmod{\Res(F, G)}, \\
y' &\equiv G(x, y) \pmod{\Res(F, G)}.
\end{align*}
Then
\[
\gcd(F(x, y), G(x, y)) = \gcd(x', y', \Res(F, G)).
\]
\end{lemma}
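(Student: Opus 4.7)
The plan is to show that each of the two quantities $\gcd(F(x,y), G(x,y))$ and $\gcd(x', y', \Res(F,G))$ divides the other. Write $R = \Res(F,G)$, $a = F(x,y)$, $b = G(x,y)$, $g = \gcd(a,b)$, and $d' = \gcd(x', y', R)$. The hypotheses give integers $k, \ell$ with $x' = a + kR$ and $y' = b + \ell R$.

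For the direction $g \mid d'$, I would first invoke Lemma \ref{l:gcd_divides_res} to conclude that $g$ divides $R$. Since $g$ divides $a$ and $R$, it follows from $x' = a + kR$ that $g \mid x'$, and the analogous argument shows $g \mid y'$. Thus $g$ divides each of $x'$, $y'$, $R$, hence divides their gcd $d'$.

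For the reverse direction $d' \mid g$, note that $d' \mid R$ and $d' \mid x'$, so the identity $a = x' - kR$ gives $d' \mid a$; likewise $d' \mid b$. Therefore $d' \mid \gcd(a,b) = g$.

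Since both $g$ and $d'$ are nonnegative (by the usual convention on $\gcd$) and each divides the other, they are equal. The only substantive input is the prior Lemma \ref{l:gcd_divides_res} (used to establish $g \mid R$, without which we could not freely pass between $a$ and $x'$ modulo $R$); everything else is routine divisibility manipulation, so I do not anticipate any real obstacle.
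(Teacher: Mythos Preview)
Your proof is correct and follows essentially the same approach as the paper: the paper states the general fact that if $\gcd(a,b)\mid R$ then $\gcd(a,b)=\gcd(a+cR,\,b+dR,\,R)$ and then invokes Lemma~\ref{l:gcd_divides_res}, while you simply unpack that general fact via the two mutual-divisibility arguments. The key input in both cases is Lemma~\ref{l:gcd_divides_res}, exactly as you identified.
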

\begin{proof}
In general, if $(a, b) \in \mathbb{Z}^{2}$ and $R \in \mathbb{Z}$ such that $\gcd(a, b)$ divides $R$, then
\[
\gcd(a, b) = \gcd(a + cR, b + dR, R)
\]
for any $c, d \in \mathbb{Z}$. By Lemma \ref{l:gcd_divides_res}, we can apply this to the case $a = F(x, y), b = G(x, y)$, and $R = \Res(F, G)$, which gives the desired result.
\end{proof}

\begin{lemma}\label{l:omega_bound}
Let $\phi: \mathbb{P}_{\mathbb{Q}}^{1} \rightarrow \mathbb{P}_{\mathbb{Q}}^{1}$ be a morphism of degree $d \geq 2$, and let $\Phi = [F, G]$ be a lift of $\phi$ such that $F$ and $G$ have integer coefficients. Then for any $P \in \mathbb{P}^{1}(\mathbb{Q})$, we have
\[
\left|\Omega_{\Phi, 0}(P)\right| \leq \log{\left|\Res(F, G)\right|}.
\]
\end{lemma}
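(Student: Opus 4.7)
The plan is to reduce the statement directly to the two preceding lemmas by choosing a good integral representative of $P$. Given any $P \in \mathbb{P}^{1}(\mathbb{Q})$, I would first write $P = [x, y]$ with $x, y \in \mathbb{Z}$ and $\gcd(x, y) = 1$, which is always possible after clearing denominators and cancelling common factors. With this choice of representative, Lemma \ref{l:compute_omega} applies directly and gives the clean formula
\[
\Omega_{\Phi, 0}(P) = \log\bigl(\gcd(F(x, y), G(x, y))\bigr).
\]

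The next step is to bound that gcd. By Lemma \ref{l:gcd_divides_res}, $\gcd(F(x, y), G(x, y))$ divides $\Res(F, G)$, so in particular
\[
\gcd(F(x, y), G(x, y)) \leq \left|\Res(F, G)\right|.
\]
I also need a lower bound to handle the absolute value. Since $\phi$ is a morphism, $F$ and $G$ have no common projective zero, so the pair $(F(x, y), G(x, y))$ is not $(0, 0)$; in particular the gcd is a positive integer, hence at least $1$. Therefore $\log(\gcd(F(x, y), G(x, y))) \geq 0$.

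Combining these two bounds gives $0 \leq \Omega_{\Phi, 0}(P) \leq \log|\Res(F, G)|$, from which the claimed inequality $|\Omega_{\Phi, 0}(P)| \leq \log|\Res(F, G)|$ follows immediately. There is no real obstacle here: the lemma is essentially a corollary of Lemmas \ref{l:compute_omega} and \ref{l:gcd_divides_res}, and the only subtlety worth mentioning explicitly is the positivity of the gcd, which in turn follows from the fact that a morphism $\phi$ has a lift with no common projective root.
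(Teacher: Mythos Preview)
Your proof is correct and follows essentially the same approach as the paper: choose coprime integer coordinates for $P$, apply Lemma~\ref{l:compute_omega} to express $\Omega_{\Phi,0}(P)$ as a $\log$ of a gcd, and then bound that gcd using Lemma~\ref{l:gcd_divides_res}. You are in fact slightly more careful than the paper in that you explicitly justify the lower bound $\Omega_{\Phi,0}(P)\geq 0$ via positivity of the gcd, a point the paper leaves implicit.
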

\begin{proof}
Fix $P \in \mathbb{P}^{1}(\mathbb{Q})$, and write $P = [x, y]$ with $(x, y) \in \mathbb{Z}^{2}$ and $\gcd(x, y) = 1$. By Lemma \ref{l:compute_omega}, we know that
\[
\Omega_{\Phi, 0}(P) = \log(\gcd(F(x, y), G(x, y))),
\]
where $\gcd(F(x, y), G(x, y))$ divides $\Res(F, G)$ by Lemma \ref{l:gcd_divides_res}. In particular, we have
\[
\gcd(F(x, y), G(x, y)) \leq \left|\Res(F, G)\right|,
\]
which shows that
\[
\left|\Omega_{\Phi, 0}(P)\right| \leq \log{\left|\Res(F, G)\right|}.
\]
\end{proof}

\section{The Algorithm}\label{s:algorithm}

By Proposition \ref{p:canonical_height_formula}, to efficiently compute $\hat{h}_{\phi}(P)$, it suffices to efficiently compute the three quantities $h(P), \mathcal{H}_{\Phi, \infty}(P)$, and $\mathcal{H}_{\Phi, 0}(P)$. Of course, computing $h(P)$ is easy, and there are efficient algorithms for computing $\mathcal{H}_{\Phi, \infty}(P)$, e.g., the method described in \cite[Section 5]{Call:1993} or \cite[exercise 5.29]{Silverman:2007}.

We give an algorithm that efficiently approximates the value of $\mathcal{H}_{\Phi, 0}(P)$ by computing the first $N$ terms in the infinite series definition \eqref{eq:decomposition_terms} of $\mathcal{H}_{\Phi, 0}(P)$; notably, our algorithm does not require the prime factorization of $\Res(F, G)$.

\begin{algorithm}\label{a:algorithm}\

\textbf{Input}: A morphism $\phi: \mathbb{P}_{\mathbb{Q}}^{1} \rightarrow \mathbb{P}_{\mathbb{Q}}^{1}$ of degree $d \geq 2$ with lift $\Phi = [F, G]$ such that $F$ and $G$ have integer coefficients, a point $P \in \mathbb{P}^{1}(\mathbb{Q})$, and a number $N$ of terms in the sum to compute.

\textbf{Output}: An approximation of the value of $\mathcal{H}_{\Phi, 0}(P)$, accurate to within $O(d^{-N})$. More precisely, the output $\mathcal{H}$ satisfies
\[
\left|\mathcal{H}_{\Phi, 0}(P) - \mathcal{H}\right| \leq \frac{\log{\left|\Res(F, G)\right|}}{(d-1)d^{N}},
\]
and is computed by working with numbers of size at most $O(\Res(F, G)^{N})$.

\begin{enumerate}
	\item Write $P = [x_{0}, y_{0}]$ with $(x_{0}, y_{0}) \in \mathbb{Z}^{2}$ and $\gcd(x_{0}, y_{0}) = 1$.
	\item Set $\mathcal{H} = 0$ and $R = \Res(F, G)$.
	\item For $i = 0, 1, \ldots, N-1$:
\begin{enumerate}
	\item $\begin{aligned}[t]
	      x'_{i+1} &= F(x_{i}, y_{i}) \pmod{R^{N-i}}, \\
	      y'_{i+1} &= G(x_{i}, y_{i}) \pmod{R^{N-i}}.
	      \end{aligned}$
	\item $g_{i} = \gcd(x'_{i+1}, y'_{i+1}, R)$.
	\item $\mathcal{H} = \mathcal{H} + \dfrac{\log(g_i)}{d^{i+1}}$ 
	\item $(x_{i+1}, y_{i+1}) = (x'_{i+1}/g_{i}, y'_{i+1}/g_{i})$.
\end{enumerate}
  \item Return $\mathcal{H}$.
\end{enumerate}
\end{algorithm}

\begin{proposition}\label{p:algorithm_analysis}
Algorithm \ref{a:algorithm} computes $\mathcal{H}$, which satisfies
\[
\mathcal{H} = \sum_{n = 0}^{N-1}\frac{\Omega_{\Phi, 0}(\phi^{n}(P))}{d^{n+1}}
\]
and
\[
\left|\mathcal{H}_{\Phi, 0}(P) - \mathcal{H}\right| \leq \frac{\log{\left|\Res(F, G)\right|}}{(d-1)d^{N}}.
\]
\end{proposition}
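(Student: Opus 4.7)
The proof splits naturally into two parts: proving the identity $\mathcal{H} = \sum_{n=0}^{N-1}\Omega_{\Phi,0}(\phi^n(P))/d^{n+1}$, and proving the error bound. The bound is the easy part: once the identity is established, the difference $\mathcal{H}_{\Phi,0}(P) - \mathcal{H}$ is the tail $\sum_{n\geq N}\Omega_{\Phi,0}(\phi^n(P))/d^{n+1}$, which by Lemma \ref{l:omega_bound} is bounded term-by-term by $\log|\Res(F,G)|/d^{n+1}$; summing the geometric series gives exactly $\log|\Res(F,G)|/((d-1)d^N)$.

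The substantive content is the identity, and for this I plan an induction on $i$. Let $(\xi_i, \eta_i) \in \mathbb{Z}^2$ denote the \emph{true} coprime integer lift of $\phi^i(P)$, so $(\xi_0, \eta_0) = (x_0, y_0)$ by the initialization in step (1). The inductive claim is that at the start of the $i$th iteration, the algorithm's pair $(x_i, y_i)$ satisfies $x_i \equiv \xi_i$ and $y_i \equiv \eta_i$ modulo $R^{N-i}$. The base case is immediate. For the inductive step, since $F$ and $G$ are polynomials with integer coefficients, reduction modulo $R^{N-i}$ commutes with polynomial evaluation, so $x'_{i+1} \equiv F(\xi_i,\eta_i)$ and $y'_{i+1} \equiv G(\xi_i,\eta_i) \pmod{R^{N-i}}$; in particular these congruences hold modulo $R$, since $R \mid R^{N-i}$.

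Now apply Lemma \ref{l:compute_omega_modulo} to the coprime pair $(\xi_i, \eta_i)$: this shows
\[
\gcd(F(\xi_i,\eta_i), G(\xi_i,\eta_i)) = \gcd(x'_{i+1}, y'_{i+1}, R) = g_i,
\]
so $g_i$ is precisely the gcd computed in Lemma \ref{l:compute_omega}, and therefore $\log(g_i) = \Omega_{\Phi,0}(\phi^i(P))$. To close the induction, I must check that $(x_{i+1}, y_{i+1}) = (x'_{i+1}/g_i, y'_{i+1}/g_i)$ is an integer pair congruent to $(\xi_{i+1}, \eta_{i+1})$ modulo $R^{N-i-1}$. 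Writing $x'_{i+1} = F(\xi_i, \eta_i) + k R^{N-i}$, both summands are divisible by $g_i$ (the first by definition, the second because $g_i \mid R$ by Lemma \ref{l:gcd_divides_res}), so the division is exact and $x_{i+1} = \xi_{i+1} + k \cdot R^{N-i}/g_i$. Since $g_i \mid R$, the modulus $R^{N-i}/g_i$ is a multiple of $R^{N-i-1}$, giving the required congruence.

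Summing $\log(g_i)/d^{i+1}$ over $i = 0, \ldots, N-1$ then yields the claimed partial sum, completing the proof. The main obstacle is the bookkeeping in the inductive step: verifying that the shrinking modulus $R^{N-i}$ still contains enough information to recover $g_i$ exactly (via the reduction to modulus $R$) while simultaneously leaving a correct residue of $(\xi_{i+1}, \eta_{i+1})$ modulo $R^{N-i-1}$ after dividing out $g_i$. Everything else is a direct assembly of the lemmas from Section \ref{s:notation}.
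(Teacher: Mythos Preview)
Your proposal is correct and follows essentially the same route as the paper: both arguments prove the error bound by summing the geometric tail via Lemma~\ref{l:omega_bound}, and both prove the identity by an induction on the loop index maintaining the invariant that $(x_i,y_i)$ agrees with the true coprime lift $(\xi_i,\eta_i)$ of $\phi^i(P)$ modulo $R^{N-i}$, then invoking Lemma~\ref{l:compute_omega_modulo} to identify $g_i$ and Lemma~\ref{l:compute_omega} to recover $\Omega_{\Phi,0}(\phi^i(P))$. The only cosmetic difference is that the paper packages the inductive claim as three simultaneous statements (a)--(c), whereas you derive (b) and (c) on the fly from (a); and note that $g_i \mid R$ is immediate from the definition $g_i = \gcd(x'_{i+1}, y'_{i+1}, R)$ in step~(3b), so the appeal to Lemma~\ref{l:gcd_divides_res} there is unnecessary (though not wrong).
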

\begin{proof}
Let $x_{j}, y_{j}, x'_{j}, y'_{j}, g_{j}, R$, and $\mathcal{H}$ be as defined in Algorithm \ref{a:algorithm}. We wish to show that
\begin{equation}\label{e:return_value}
\mathcal{H} = \sum_{n = 0}^{N-1}\frac{\Omega_{\Phi, 0}(\phi^{n}(P))}{d^{n+1}}
\end{equation}
and that
\begin{equation}\label{e:inequality}
\left|\mathcal{H}_{\Phi, 0}(P) - \sum_{n = 0}^{N-1}\frac{\Omega_{\Phi, 0}(\phi^{n}(P))}{d^{n+1}}\right| \leq \frac{\log{\left|\Res(F, G)\right|}}{(d-1)d^{N}}.
\end{equation}
By Lemma \ref{l:omega_bound}, we have
\begin{align*}
\left|\mathcal{H}_{\Phi, 0}(P) - \sum_{n = 0}^{N-1}\frac{\Omega_{\Phi, 0}(\phi^{n}(P))}{d^{n+1}}\right| &\leq \sum_{n = N}^{\infty}\frac{\left|\Omega_{\Phi, 0}(\phi^{n}(P))\right|}{d^{n+1}} \\
     &\leq \sum_{n = N}^{\infty}\frac{\log{\left|\Res(F, G)\right|}}{d^{n+1}} \\
     &= \frac{\log{\left|\Res(F, G)\right|}}{(d-1)d^{N}},
\end{align*}
which establishes \eqref{e:inequality}.

Recursively define a sequence $(a_{j}, b_{j})$ by $(a_{0}, b_{0}) = (x_{0}, y_{0})$ and
\[
(a_{j+1}, b_{j+1}) = \left(\frac{F(a_{j}, b_{j})}{\gcd(F(a_{j}, b_{j}), G(a_{j}, b_{j}))}, \frac{G(a_{j}, b_{j})}{\gcd(F(a_{j}, b_{j}), G(a_{j}, b_{j}))}\right)
\]
for $j = 0, \ldots, N-1$. It is clear by induction that $(a_{j}, b_{j}) \in \mathbb{Z}^{2}$ with $\gcd(a_{j}, b_{j}) = 1$ and that $\phi^{j}(P) = [a_{j}, b_{j}]$ for $j = 0, \ldots, N-1$. In particular, Lemma \ref{l:compute_omega} implies that
\[
\Omega_{\Phi, 0}(\phi^{j}(P)) = \log(\gcd(F(a_{j}, b_{j}), G(a_{j}, b_{j})))
\]
for $j = 0, \ldots, N-1$, so to establish \eqref{e:return_value}, it suffices to show that
\[
g_{j} = \gcd(F(a_{j}, b_{j}), G(a_{j}, b_{j}))
\]
for $j = 0, \ldots, N-1$. By using induction on $j$, we will show the stronger result that the following three equations hold for $j = 0, \ldots, N-1$:
\begin{enumerate}[label=(\alph*)]
  \item \label{item:a} $x_{j} \equiv a_{j} \pmod{R^{N - j}}; \quad y_{j} \equiv b_{j} \pmod{R^{N - j}}$.
	\item \label{item:b} $x'_{j+1} \equiv F(a_{j}, b_{j}) \pmod{R^{N-j}}; \quad y'_{j+1} \equiv G(a_{j}, b_{j}) \pmod{R^{N-j}}$.
	\item \label{item:c} $g_{j} = \gcd(F(a_{j}, b_{j}), G(a_{j}, b_{j}))$.
\end{enumerate}
For the base case $(j = 0)$, we have that \ref{item:a} and \ref{item:b} hold because by definition, $(a_{0}, b_{0}) = (x_{0}, y_{0})$ and
\[
x'_{1} \equiv F(x_{0}, y_{0}) \pmod{R^{N}}; \quad y'_{1} \equiv G(x_{0}, y_{0}) \pmod{R^{N}}.
\]
Moreover, Lemma \ref{l:compute_omega_modulo} and \ref{item:b} imply that
\[
\gcd(F(a_{0}, b_{0}), G(a_{0}, b_{0})) = \gcd(x'_{1}, y'_{1}, R),
\]
which establishes \ref{item:c}.

Now assume that \ref{item:a}, \ref{item:b}, and \ref{item:c} hold for $j = m-1$, for some fixed $m \leq N-1$. By \ref{item:b}, we have
\begin{align*}
x'_{m} &\equiv F(a_{m-1}, b_{m-1}) \pmod{R^{N-(m-1)}}, \\
y'_{m} &\equiv G(a_{m-1}, b_{m-1}) \pmod{R^{N-(m-1)}}.
\end{align*}
We know that $g_{m-1}$ divides $x'_{m}, y'_{m}$, and $R$ by definition, and \ref{item:c} implies that
\[
g_{m-1} = \gcd(F(a_{m-1}, b_{m-1}), G(a_{m-1}, b_{m-1}));
\]
it follows that
\begin{align*}
\frac{x'_{m}}{g_{m-1}} &\equiv \frac{F(a_{m-1}, b_{m-1})}{\gcd(F(a_{m-1}, b_{m-1}), G(a_{m-1}, b_{m-1}))} \pmod{R^{N-m}}, \\
\frac{y'_{m}}{g_{m-1}} &\equiv \frac{G(a_{m-1}, b_{m-1})}{\gcd(F(a_{m-1}, b_{m-1}), G(a_{m-1}, b_{m-1}))} \pmod{R^{N-m}},
\end{align*}
which establishes \ref{item:a} for $j = m$. Then \ref{item:b} for $j = m$ follows immediately from \ref{item:a} (for $j = m$) and from the definition of $x'_{m+1}, y'_{m+1}$. Finally, Lemma \ref{l:compute_omega_modulo} applied to \ref{item:b} when $j = m$ yields
\[
\gcd(F(a_{m}, b_{m}), G(a_{m}, b_{m})) = \gcd(x'_{m+1}, y'_{m+1}, R),
\]
which shows that \ref{item:c} holds for $j = m$.
\end{proof}

\begin{remark}
The modulus in Step (3) of Algorithm \ref{a:algorithm} decreases with each iteration of the loop. As a result, each successive iteration generally has a faster runtime than the previous one.
\end{remark}

\begin{remark}\label{r:resultant}
The error bound
\[
\left|\mathcal{H}_{\Phi, 0}(P) - \mathcal{H}\right| \leq \frac{\log{\left|\Res(F, G)\right|}}{(d-1)d^{N}}
\]
between the output $\mathcal{H}$ of Algorithm \ref{a:algorithm} and the true value of $\mathcal{H}_{\Phi, 0}(P)$ involves the quantity $\left|\Res(F, G)\right|$. Moreover, the runtime of our algorithm is determined by the time needed to perform calculations with numbers of size $O(\Res(F, G)^N)$. So to understand the accuracy and efficiency of Algorithm \ref{a:algorithm}, it is necessary to have a bound on the size of $\Res(F, G)$.

Recall (e.g., \cite[Section 2.4]{Silverman:2007}) that the resultant of two homogeneous polynomials of degree $d$ can expressed as the determinant of a certain $2d \times 2d$ matrix involving the coefficients of the polynomials. It follows that
\[
\left|\Res(F, G)\right| \leq (2d)!\norm{F, G}^{2d}.
\]
\end{remark}

\begin{remark}\label{r:factorization}
We can incorporate a factoring step in Algorithm \ref{a:algorithm}, as follows. Choose some ``small'' bound $B$ and factor $R = \Res(F, G)$ into primes as
\[
R = p_{1}^{e_{1}} \cdots p_{t}^{e_{t}} \cdot \tilde{R},
\]
with each $p_{i} \leq B$. Then $\mathcal{H}_{\Phi, 0}(P)$ is closely approximated by
\[
\widetilde{\mathcal{H}} + \sum_{i=1}^{t}\tilde{\lambda}_{\phi, p_{i}}(P),
\]
where the $\tilde{\lambda}_{\phi, p_{i}}(P)$'s are the modified local canonical heights defined by \eqref{eq:modified_local_height}, and where $\widetilde{\mathcal{H}}$ denotes the output of Algorithm \ref{a:algorithm} with $R$ replaced by $\tilde{R}$ in Step (3). As previously noted, there are efficient algorithms in the literature for computing $\tilde{\lambda}_{\phi, p_{i}}(P)$; alternatively, we can compute $\tilde{\lambda}_{\phi, p_{i}}(P)$ by running Algorithm $\ref{a:algorithm}$ with $R$ replaced by $p_{i}^{e_{i}}$ in Step (3).

More generally, if $R$ factors into pairwise relatively prime integers as
\[
R = \tilde{R}_{1}\tilde{R_{2}}\cdots\tilde{R}_{t},
\]
then $\mathcal{H}_{\Phi, 0}(P)$ is approximated by the sum
\[
\widetilde{\mathcal{H}}_{1} + \widetilde{\mathcal{H}}_{2} + \cdots + \widetilde{\mathcal{H}}_{t},
\]
where $\widetilde{\mathcal{H}}_{i}$ denotes the output of Algorithm \ref{a:algorithm} with $R$ replaced by $\tilde{R}_{i}$ in Step~(3). This might be useful, for instance, if we can factor $R$ as $R = \tilde{R}_{1}\tilde{R_{2}}$, where $\tilde{R}_{1}$ and $\tilde{R}_{2}$ are large, composite, and relatively prime.
\end{remark}

\section{Numerical Examples}\label{s:examples}

In this section, we give some examples illustrating the use of Algorithm~\ref{a:algorithm}. The most novel and useful aspect of our algorithm is that it does not require that we factor $R = \Res(F, G)$. By Remark \ref{r:resultant}, we expect that $R \approx (2d)!\norm{F, G}^{2d}$ for many morphisms $\phi$, so Algorithm \ref{a:algorithm} is particularly advantageous if $d$ is of moderate size, or if $F$ and $G$ have large coefficients. We give two examples demonstrating each of these scenarios.

We also note that the current implementation in Sage \cite{Sage} for computing $\hat{h}_{\phi}(P)$ uses the decomposition \eqref{eq:modified_local_heights} of $\hat{h}_{\phi}(P)$ into local canonical heights. In particular, one of the steps in this algorithm requires the factorization of $R$, rendering the algorithm completely impractical for morphisms $\phi$ for which $R$ is very large. Algorithm \ref{a:algorithm} can be used to compute $\hat{h}_{\phi}(P)$ in these cases.

In the examples that follow, we continue to employ the same notation as in the previous sections, including the notation in Algorithm \ref{a:algorithm}.

\begin{example}\label{e:1}
Define ``random'' polynomials
\begin{align*}
\tau_{1}(z) &= 3z^{80} + z^{79} + 4z^{78} + z^{77} + 5z^{76} + \cdots + 9z^{2} + 3z + 7 \\
\tau_{2}(z) &= 2z^{80} + 7z^{79} + z^{78} + 8z^{77} + 2z^{76} + \cdots + 6z^{2} + 9
\end{align*}
such that the coeffient of $z^{81-i}$ in $\tau_{1}$ (respectively $\tau_{2}$) equals the $i$th digit of $\pi$ (respectively $e$), and set
\[
\sigma(z) = \frac{\tau_{1}(z)}{\tau_{2}(z)}.
\]
Let $\phi: \mathbb{P}_{\mathbb{Q}}^{1} \rightarrow \mathbb{P}_{\mathbb{Q}}^{1}$ be the morphism of degree $d = 80$ induced by the rational map $\sigma$. We calculate the canonical height of the point $P = [-5, 1]$ relative to $\phi$.

Taking $\Phi = [F, G]$ to be a lift of $\phi$, where $F$ and $G$ are the respective degree $80$ homogenizations of $\tau_{1}$ and $\tau_{2}$, we compute
\begin{align*}
\Res(F, G) &= 516438964415067184645\cdots303541485376492059392 \\
           &= 2^{8} \cdot 3^{2} \cdot R' \\ 
           &\approx 2^{653}
\end{align*}
for some large $R'$. In particular, $\Res(F, G)$ is over $650$ bits and is thus time-consuming to factor into primes.

Using Algorithm \ref{a:algorithm} with $N = 50$, we compute
\[
g_{0} = 36, \quad g_{1} = 2, \quad g_{2} = 12,
\]
and
\[
g_{i} = \begin{cases}
        2 & \text{if} \ i \equiv 1 \pmod{2} \\
        4 & \text{if} \ i \equiv 0 \pmod{2}
        \end{cases}
\]
for $3 \leq i \leq 49$, so that
\begin{align*}
\mathcal{H}_{\Phi, 0}(P) &\approx \sum_{i=0}^{49}\frac{\log(g_{i})}{d^{i+1}} \\
                         &\approx 0.044907161659276960113044136254.
\end{align*}
By Proposition \ref{p:algorithm_analysis}, the error in this approximation of $\mathcal{H}_{\Phi, 0}(P)$ (prior to truncating the decimal expansion) is at most
\[
\frac{\log{\left|\Res(F, G)\right|}}{(d-1)d^{N}} < 10^{-94}.
\]
Note that for this computation, we must work with numbers modulo $\Res(F, G)^{50}$, which is approximately $32674$ bits.

Using a close variant of the algorithm in \cite[exercise 5.29]{Silverman:2007} with $50$ iterations, we compute
\[
\mathcal{H}_{\Phi, \infty}(P) \approx -0.013757185585214127675440651473.
\]

We also have
\[
h(P) = \log(5) \approx 1.6094379124341003746007593332.
\]

So by Proposition \ref{p:canonical_height_formula}, we have
\begin{align*}
\hat{h}_{\phi}(P) &= h(P) - \mathcal{H}_{\Phi, \infty}(P) - \mathcal{H}_{\Phi, 0}(P) \\
                  &\approx 1.5782879363600375421631558484.
\end{align*}
\end{example}

\begin{example}
This example is similar to Example \ref{e:1}, except in this case we use a point whose canonical height relative to our chosen morphism is very small.

Define ``random'' polynomials
\[
\tau_{1}(z) = \sum_{i=0}^{65}a_{i}z^{65-i}, \quad \tau_{2}(z) = \sum_{i=0}^{65}b_{i}z^{65-i}
\]
by
\[
a_{i} = \begin{cases}
        -i & \text{if} \ i \ \text{is prime} \\
        1 & \text{if} \ i \ \text{is not prime}
        \end{cases}, \quad b_{i} = \begin{cases}
        1 & \text{if} \ 0 \leq i \leq 33 \\
        -1 & \text{if} \ 34 \leq i \leq 65
        \end{cases}
\]
and let 
\[
\sigma(z) = \frac{\tau_{1}(z)}{\tau_{2}(z)}.
\]
Let $\phi: \mathbb{P}_{\mathbb{Q}}^{1} \rightarrow \mathbb{P}_{\mathbb{Q}}^{1}$ be the degree $d = 65$ morphism induced by $\sigma$, and let $\psi = [F, G]$ be the lift of $\phi$ obtained by taking the degree $65$ homogenizations of $\tau_{1}$ and $\tau_{2}$. We have
\begin{align*}
\Res(F, G) &= 201910883195612036622\cdots662564775325296900059 \\
           &= 3^{3} \cdot 19 \cdot R' \\
           &\approx 2^{433}
\end{align*}
for some large $R'$; at over $430$ bits, $\Res(F, G)$ is difficult to factor.

We calculate the canonical height of the point $P = [0, 1]$ relative to $\phi$. The orbit of $P$ under $\phi$ starts as
\[
[0, 1] \mapsto [-1, 1] \mapsto [1, 0] \mapsto [1, 1] \mapsto [-453, 2] \mapsto \cdots,
\]
so one might expect that $\hat{h}_{\phi}(P)$ is small. From Algorithm \ref{a:algorithm} with $N = 50$, which gives an approximation of $\mathcal{H}_{\Phi, 0}(P)$ that is accurate to within $10^{-89}$, we compute
\[
g_{0} = 1, g_{2} = 513, g_{2} = 1, g_{3} = 1, \ldots, g_{46} = 19, g_{47} = 1, g_{48} = 1, g_{49} = 27;
\]
each $g_{i} \in \{1, 19, 27, 513\}$, and the sequence of $g_{i}$'s is periodic with period $20$ (at least for the first $50$ terms in the sequence). Note also that $513 = 3^{3} \cdot 19$ really does divide $\Res(F, G)$. So
\begin{align*}
\mathcal{H}_{\Phi, 0}(P) &\approx \sum_{i=0}^{49}\frac{\log(g_{i})}{d^{i+1}} \\
                         &\approx 0.0014769884100219430907588636039.
\end{align*}
We also compute
\[
\mathcal{H}_{\Phi, \infty}(P) \approx -0.0014773310580301870814703316397
\]
and
\[
h(P) = \log(1) = 0.
\]
Hence, 
\begin{align*}
\hat{h}_{\phi}(P) &= h(P) - \mathcal{H}_{\Phi, \infty}(P) - \mathcal{H}_{\Phi, 0}(P) \\
                  &\approx 0.00000034264800824399071146803578925.
\end{align*}

\end{example}

\begin{example}
Consider the family of degree $2$ morphisms $\phi_{a}: \mathbb{P}_{\mathbb{Q}}^{1} \rightarrow \mathbb{P}_{\mathbb{Q}}^{1}$, where $\phi_{a}$ is induced by the rational map
\[
z \mapsto \frac{z^{2} + z + 1}{z^{2} + az + 2}, \quad a \in \mathbb{Z}.
\]
A lift $\psi_{a} = [F_{a}, G_{a}]$ for $\phi_{a}$ is given by
\[
F_{a}(X, Y) = X^{2} + XY + Y^{2}, \quad G_{a}(X, Y) = X^{2} + aXY + 2Y^{2},
\] 
and the corresponding resultant is
\[
\Res(F_{a}, G_{a}) = a^{2} - 3a + 3,
\]
which is difficult to factor when $a$ is large. For instance, taking 
\[
a = 314159265358979323846\cdots964462294895493038196
\]
to equal the number formed by the first $201$ digits of $\pi$, we have
\[
\Res(F_{a}, G_{a}) = 3 \cdot 7 \cdot 61 \cdot R',
\]
where $R' \approx 2^{1321}$. For this value of $a$ and $P = [1, 1]$, we calculate $\hat{h}_{\phi}(P)$.

Using $N = 50$ terms in Algorithm \ref{a:algorithm}, which allows for an approximation with error less than $10^{-12}$, we compute
\[
g_{0} = 3, g_{1} = 1, g_{2} = 1, g_{3} = 3, \ldots, g_{46} = 3, g_{47} = 1, g_{48} = 3, g_{49} = 1;
\]
each $g_{i} \in \{1, 3\}$, and there is no discernible repeating pattern in the sequence of $g_{i}$'s. This gives the approximation
\begin{align*}
\mathcal{H}_{\Phi, 0}(P) &\approx \sum_{i=0}^{49}\frac{\log(g_{i})}{2^{i+1}} \\
                         &\approx 0.62900702. 
\end{align*}
We also calculate
\[
\mathcal{H}_{\Phi, \infty}(P) \approx -308.06749879, 
\]
and
\[
h(P) = \log(1) = 0.
\]
Therefore,
\begin{align*}
\hat{h}_{\phi}(P) &= h(P) - \mathcal{H}_{\Phi, \infty}(P) - \mathcal{H}_{\Phi, 0}(P) \\
                  &\approx 307.43849177. 
\end{align*}
\end{example}

\begin{example}
Let $\phi_{a}: \mathbb{P}_{\mathbb{Q}}^{1} \rightarrow \mathbb{P}_{\mathbb{Q}}^{1}$ be the much-studied family of degree $2$ morphisms induced by the rational map
\[
z \mapsto az + \frac{1}{z}, \quad a \in \mathbb{Z}.
\]
See for example \cite{Manes}. We can lift $\phi_{a}$ to $\psi_{a} = [F_{a}, G_{a}]$, where
\[
F_{a}(X, Y) = aX^{2} + Y^{2}, \quad G_{a}(X, Y) = XY.
\]
The resultant of $F_{a}$ and $G_{a}$ is
\[
\Res(F_{a}, G_{a}) = a,
\]
which is time-consuming to factor for large $a$.

As an example, we compute the canonical height of the point $P = [a, 1]$ relative to $\phi_{a}$ in the case where
\begin{align*}
a &= 123018668453011775513\cdots419597459856902143413 \\
  &\approx 2^{768}
\end{align*}
is RSA-768 -- a 768-bit RSA modulus which took a team of researchers over 2 years to factor with the number field sieve (c.f. \cite{RSA}).

Running Algorithm \ref{a:algorithm} to $N = 50$ terms, which gives an approximation with error bounded above by $10^{-12}$, we get $g_{0} = 0$, $g_{1} = a$, and $g_{i} = 1$ for $2 \leq i \leq 49$. This yields the approximation
\begin{align*}
\mathcal{H}_{\Phi, 0}(P) &\approx \frac{\log(a)}{2^{2}} \\
                         &\approx 133.0260806. 
\end{align*}
We also calculate
\[
\mathcal{H}_{\Phi, \infty}(P) \approx -532.1043224 
\]
and
\[
h(P) = \log(a) \approx 532.1043224. 
\]
Our computation seems to indicate that $\left|\mathcal{H}_{\Phi, \infty}(P)\right| = \log(a)$. A careful analysis of the archimedean term reveals that in fact $\left|\mathcal{H}_{\Phi, \infty}(P)\right| > \log(a)$, but the difference is minuscule and so cannot be detected with just $N = 50$ terms. 

It follows that
\begin{align*}
\hat{h}_{\phi}(P) &= h(P) - \mathcal{H}_{\Phi, \infty}(P) - \mathcal{H}_{\Phi, 0}(P) \\
                  &\approx 931.1825642. 
\end{align*}
\end{example}

\bibliographystyle{plain} 
\bibliography{canonical_height_algorithm_bibliography}

\end{document}